\newtheorem{thm}{Theorem}[section]
\newtheorem{lem}[thm]{Lemma}
\newtheorem{cor}[thm]{Corollary}
\theoremstyle{definition}
\newtheorem{dfn}[thm]{Definition}
\newtheorem{ex}[thm]{Counterexample}
\newtheorem{cl}[thm]{Algorithm}
\DeclareMathOperator{\diag}{diag}
\newcommand{\T}{ T}
\renewcommand{\P}{\mathbb{P}}
\newcommand{\D}{\mathbb{D}}
\newcommand{\M}{\mathbb{M}}
\newcommand{\ba}{\boldsymbol{a}}
\newcommand{\bb}{\boldsymbol{b}}
\newcommand{\allone}{\mathbf{1}}
\newcommand{\allz}{\mathbf{0}}
\title{Transforming a matrix into a standard form}
\author{Akihiro Munemasa}
\address{Research Center for Pure and Applied Mathematics\\
Graduate School of Information Sciences\\
Tohoku University}
\email{munemasa@math.is.tohoku.ac.jp}
\author{Pritta Etriana Putri}
\address{Research Center for Pure and Applied Mathematics\\
Graduate School of Information Sciences\\
Tohoku University}
\email{pritta@ims.is.tohoku.ac.jp}
\begin{document}
\keywords{monomial matrix, weighing matrix, Hadamard matrix, permutation matrix}
\subjclass[2010]{15B34, 05B20}
\date{\today}
\maketitle

\begin{abstract}

We show that every matrix all of whose entries are in a fixed subgroup of the group of units of a commutative ring with identity is equivalent to a standard form. As a consequence, we improve the proof of Theorem 5 in D.~Best, H.~Kharaghani, H.~Ramp [Disc. Math. 313 (2013), 855--864].
\end{abstract}

\section{Introduction}
Throughout this note, we let $R$ be a commutative ring with
identity. We fix a subgroup $T$ of the group of units of $R$, and
set $T_0=T\cup\{0\}$. The set of $m\times n$ matrices with entries
in $T_0$ is denoted by $T_0^{m\times n}$. If $T=\{z\in\mathbb{C}:|z|=1\}$, then $W\in T_0^{n\times n }$ is called \emph{a unit weighing matrix of order $n$ with weight $w$} provided that $WW^*=wI$ where $W^*$ is the transpose conjugate of $W$. 
 Unit weighing matrices are introduced by D.~Best, H.~Kharaghani, and H.~Ramp in \cite{DB1,DB2}. Moreover, a unit weighing matrix is known as a unit Hadamard matrix if $w=n$ (see \cite{DB3}). A unit weighing matrix in which every entry is in $\{0,\pm1\}$ is called a \emph{weighing matrix}. We refer the reader to \cite{RH1} for an extensive discussion of weighing matrices, and to \cite{C1} for more information on applications of weighing matrices. 

The study on the number of inequivalent unit weighing matrices was initiated in \cite{DB1}. Also, observing the number of weighing matrices in 
standard form leads to an upper bound on the number of inequivalent unit weighing matrices \cite{DB1}. In this work, we will introduce a standard form of an arbitrary matrix in $T_0^{m\times n}$ and show that every matrix in $T_0^{m\times n}$ is equivalent to a matrix in standard form. 

We equip $T_0$ with a total ordering $\prec$
satisfying 
$
\min (T_0)=1\text{ and }\max(T_0)=0.
$
Moreover, let $\ba=(a_1, \ldots, a_n)$ and $\bb=(b_1, \ldots, b_n)$ be arbitrary row vectors with entries in $T_0$. 
If $k$ is the smallest index such that $a_k\neq b_k$, then we write $\ba<\bb$ 
provided $a_k\prec b_k$. We write $\ba\leq\bb$ if $\ba<\bb$ or $\ba=\bb$. 
If $\ba_1, \ldots, \ba_m$ are row vectors of a matrix $A\in T_0^{m\times n}$ 
and $\ba_1<\cdots<\ba_m$, then we say that the rows of $A$ are in \emph{lexicographical order}. 

\begin{dfn}\label{def:1}
We say that a matrix in $T_0^{m\times n}$
is in \emph{standard form} if the following conditions are satisfied:
\begin{itemize}
\item[(S1)] The first non-zero entry in each row is $1$.
\item[(S2)] The first non-zero entry in each column is $1$.
\item[(S3)] The first row is ones followed by zeros.
\item[(S4)] The rows are in lexicographical order according to $\prec$.
\end{itemize}
\end{dfn}

The subset of $T_0^{m\times m}$
consisting of permutation matrices, nonsingular diagonal matrices and
monomial matrices, are denoted respectively, by 
$\P_m,\D_m$ and $\M_m$. 
Then $\M_m=\P_m\D_m$.
\begin{dfn}

For $A,B\in T_0^{m\times n}$, we say that $A$ is \emph{equivalent} to $B$ if there exist monomial $T_0$-matrices $M_1$ and $M_2$ such that $M_1AM_2=B$.
\end{dfn}

We will restate the proof of \cite[Theorem 5]{DB1} as the following algorithm. 

\begin{cl}\label{c1}
Let $W$ be an arbitrary unit weighing matrix. 
\begin{enumerate}[(1)]
\item We multiply each $i$th row of $W$ by $r_i^{-1}$ where $r_i$ is the first non-zero entry in $i$th row. Denote the obtained matrix by $W^{(1)}$.
\item Let $c_j$ be the first non-zero entry in $j$th column of $W^{(1)}$. Let $W^{(2)}$ obtained from $W^{(1)}$ by multiplying each $j$th column by $c_j^{-1}$. 
\item Permute the columns of $W^{(2)}$ so that the first row has $w$ ones. Denote the resulting matrix by $W^{(3)}$.
\item Let $W^{(4)}$ be a matrix obtained from $W^{(3)}$ by sorting the rows of $W^{(3)}$ lexicographically with the ordering $\prec$. \end{enumerate}
Then $W^{(4)}$ is in standard form. 
\end{cl}
 
The steps (1)--(4) in Algorithm \ref{c1} was used in order to prove Theorem 5 in \cite{DB1}. However, we provide a counterexample to show that this algorithm does not produce a standard form.

\begin{ex}
The matrix
\[
W=
\begin{bmatrix}
1&-i&i&1&0&0\\
0&1&1&0&i&i\\
1&0&0&-1&-i&i\\
1&0&0&-1&i&-i\\
0&1&1&0&-i&-i\\
1&i&-i&1&0&0
\end{bmatrix}
\]
is a unit weighing matrix, where $i$ is a $4$th root of unity in $\mathbb{C}$. 
Also,we equip the set $\{0,\pm i,\pm 1\}$ with a total ordering $\prec$ 
defined by $1\prec -1\prec i \prec -i \prec 0$. 
Since the first nonzero entry in each row of $W$ is one, $W^{(1)}=W$. 
Applying step $(2)$, we obtain \[W^{(2)}=
\begin{bmatrix}
1&1&1&1&0&0\\
0&i&-i&0&1&1\\
1&0&0&-1&-1&1\\
1&0&0&-1&1&-1\\
0&i&-i&0&-1&-1\\
1&-1&-1&1&0&0
\end{bmatrix}
.\]
Notice that the first row of $W^{(2)}$ is all ones followed by zeros. So, $W^{(3)}=W^{(2)}$. 
Finally, by applying the last step of the algorithm, we have
\[
W^{(4)}=
\begin{bmatrix}
1&1&1&1&0&0\\
1&-1&-1&1&0&0\\
1&0&0&-1&1&-1\\
1&0&0&-1&-1&1\\
0&i&-i&0&1&1\\
0&i&-i&0&-1&-1\\
\end{bmatrix}
.\]

We see that $W^{(4)}$ is not in standard form. So, we conclude that the algorithm does not produce a matrix in standard form as claimed.
\end{ex}

This counterexample shows that the additional steps are needed to complete the proof of Theorem 5 in \cite{DB1}. In the next section, we will prove a more general theorem than  \cite[Theorem 5]{DB1} by showing that every matrix in $T_0^{m\times n}$ is equivalent to a matrix that is in 
standard form. 

\section{Main Theorem}

In addition to the conditions (S1)--(S4) in Definition \ref{def:1}, we will consider the following condition:
\begin{itemize}
\item[(S3)$'$] The first nonzero row is ones followed by zeros.
\end{itemize}

Note that (S3)$'$ is weaker than (S3). The condition (S3)$'$ is crucial in the proof of Lemma \ref{lem:P1},
where we encounter a matrix whose first row consists entirely of zeros. 

\begin{lem}\label{lem:P1}
Let 
\[A=\begin{bmatrix}A_1& A_2\end{bmatrix}\in T_0^{m\times(n_1+n_2)},\]
where $A_i\in T_0^{m\times n_i}$, $i=1,2$. 
Then there exist $P\in\P_m$ and $M\in\M_{n_2}$ such that
$PA_2M$ satisfies {\rm(S2)} and {\rm(S3)}$'$, 
and $\begin{bmatrix}PA_1 &PA_2M\end{bmatrix}$ satisfies {\rm(S4)}.
\end{lem}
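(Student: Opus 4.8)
The plan is to exhibit the desired pair $(P,M)$ as the minimiser of a suitable ordering over a finite set of candidates. Write $M=QD$ with $Q\in\P_{n_2}$ and $D\in\D_{n_2}$, so that $Q$ permutes the columns of $A_2$ and $D$ rescales each column by an element of $T$. The key first observation is that, once a row order $P$ and a column order $Q$ are fixed, condition (S2) determines $D$ uniquely: each nonzero column of $PA_2Q$ must be divided by its topmost nonzero entry (a unit), while each zero column is left unchanged. Denote the resulting normalised matrix by $B(P,Q)=[\,PA_1\mid PA_2QD\,]$ and let $\Sigma=\{B(P,Q):P\in\P_m,\ Q\in\P_{n_2}\}$. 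Since $D$ is forced, $\Sigma$ is finite (of size at most $m!\,n_2!$), and every element of $\Sigma$ satisfies (S2) on its $A_2$-block by construction.

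Now read each matrix in $\Sigma$ as a single word over the totally ordered alphabet $(T_0,\prec)$, concatenating its rows from top to bottom and, within each row, from left to right; let $B^{\ast}=B(P,Q)$ be an element of $\Sigma$ whose word is smallest. I claim $B^{\ast}$ is the matrix we want. Condition (S2) holds because $B^{\ast}\in\Sigma$. For (S4), suppose the full rows of $B^{\ast}$ were not in increasing order; sorting them strictly lowers the word, and re-normalising the $A_2$-columns of the sorted matrix produces another element of $\Sigma$ whose word is no larger, contradicting minimality. For (S3)$'$, let row $r$ be the first row of $B^{\ast}$ with nonzero $A_2$-part; every $A_2$-column in which row $r$ is nonzero has only zeros above row $r$, so row $r$ carries that column's leading nonzero and (S2) forces the entry to be $1$. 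Hence the $A_2$-part of row $r$ lies in $\{0,1\}$, and if its $1$'s did not precede its $0$'s we could permute the columns of $A_2$ to make them do so: this leaves the all-zero $A_2$-rows above $r$ untouched, keeps the columns normalised, and strictly lowers the word (since $1\prec0$), again contradicting minimality.

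The crucial point, and the step I expect to be the main obstacle, is the monotonicity invoked twice above: I must verify that re-normalising a column can only lower the word. This is exactly where the hypothesis $\min(T_0)=1$ enters. Normalising a single column changes only that column's entries, and the topmost entry it affects is the column's leading nonzero, whose value $t$ is replaced by $1\preceq t$; thus the earliest position of the word that changes strictly decreases when $t\neq1$ and is unchanged otherwise. Combined with the facts that sorting rows never increases the word and that $\Sigma$ is finite, this both guarantees that a minimiser exists and makes the two exchange arguments genuinely reach a contradiction. The remaining care is bookkeeping: after re-sorting one must re-normalise in order to return to $\Sigma$, and one should either assume that the rows of the full matrix are distinct or read (S4) as a non-strict order, since genuinely repeated rows cannot be separated by any $P$ and $M$.
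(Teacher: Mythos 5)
Your proof is correct, but it takes a genuinely different route from the paper's. The paper argues by induction on $n_2$: it groups the equal rows of $A_1$ into blocks, chooses in the first block with nonzero $A_2$-part a row $\bb$ with the maximum number of nonzero entries, normalises $\bb M$ to ones followed by zeros, sorts each block, and then recurses on the trailing zero columns, assembling $P$ and $M$ from block-diagonal pieces. You instead observe that (S2) forces the diagonal part $D$ once the row order $P$ and column order $Q$ are fixed, and take the minimiser of the row-major word over the finite candidate set $\{[PA_1\mid PA_2QD]\}$; (S4) and (S3)$'$ then follow from two exchange arguments, both resting on the monotonicity of re-normalisation, which you correctly reduce to $\min(T_0)=1$ (and, for (S3)$'$, to $1\prec 0$). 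The two delicate points both check out: re-normalising several columns at once still weakly lowers the word, because the earliest changed position is always some column's leading nonzero, where $t$ is replaced by $1\preceq t$; and the column permutation used for (S3)$'$ fixes the all-zero $A_2$-rows above row $r$, keeps the columns normalised, and so stays inside your candidate set even though it may destroy (S4) below row $r$ --- which is harmless, since you only need a strictly smaller competitor. Your extremal argument is shorter and makes the role of the hypotheses on $\prec$ completely transparent, at the cost of being non-constructive compared with the paper's algorithmic recursion (whose shape is reused in Lemma \ref{lem:P2}). Your closing caveat about repeated rows is well taken and applies equally to the paper: its own proof writes $A_1$ with rows of multiplicities $m_1,\dots,m_k$, so it tacitly reads (S4) as a non-strict order, exactly as you propose.
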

\begin{proof}
Without loss of generality, we may assume 
$A_1$ satisfies (S4).
Then there exist  row vectors $\ba_1, \ldots, \ba_k$ of $A_1$ such that 
$\ba_1<\cdots<\ba_k$, and positive integers
$m_1,\dots,m_k$ such that
\[A_1=\begin{bmatrix}\allone^\top_{m_1}&&\\ &\ddots&
\\ &&\allone^\top_{m_k}\end{bmatrix}
\begin{bmatrix}\ba_1\\ \vdots\\ \ba_k\end{bmatrix},\]
where $\sum_{i=1}^k m_i=m$.
Write
\[A_2=\begin{bmatrix}B_1\\ \vdots\\ B_k\end{bmatrix},\]
where $B_i\in T_0^{m_i\times n_2}$ for $i=1,2,\dots,k$. 
We may assume $B_1\neq0$, since otherwise the proof reduces
to establishing the assertion for the matrix $A$ with
the first $m_1$ rows deleted.
Let $\bb$ be a row vector of $B_1$
with maximum number of
nonzero components. Then there exists $M\in\M_{n_2}$ such that
the vector $\bb M$ constitutes ones followed by zeros.
Moreover, for each 
$i\in\{1,\dots,k\}$, 
there exists 
$P_i\in\P_{m_i}$ such that the rows of $P_iB_iM$ are
in lexicographic order.
It follows that $\bb M$ is the first row of $P_1B_1M$, 
that is also the first row of $PA_2M$.
Set
$P=\diag(P_1,\dots,P_k)$.
Then $PA_2M$ satisfies (S3).
Since $PA_1=A_1$, 
we see that $\begin{bmatrix}PA_1 &PA_2M\end{bmatrix}$ satisfies (S4).

With the above notation, 
we prove the assertion by induction on $n_2$. First we treat the
case where $\bb M=\allone$. This in particular includes the case
where $n_2=1$, the starting point of the induction.
In this case, the first row of $PA_2M$ is $\allone$,
hence $PA_2M$ satisfies (S2). The other
assertions have been proved already.

Next we consider the case where
$\bb M=\begin{bmatrix}\allone_{n_2-n'_2}&\allz_{n'_2}\end{bmatrix}$,
with $0<n'_2<n_2$. 
Define 
$A'_1\in T_0^{m\times (n_1+n_2-n'_2)}$ and
$A'_2\in T_0^{m\times n'_2}$ by setting
$\begin{bmatrix} A'_1&A'_2\end{bmatrix}$ to be the
matrix obtained from
$\begin{bmatrix} A_1& PA_2M\end{bmatrix}$
by deleting the first row.
By inductive hypothesis, there exist $P'\in\P_{m-1}$ and $M'\in\M_{n'_2}$
such that $P'A'_2M'$ satisfies (S2) and (S3)$'$, 
and $\begin{bmatrix}P'A'_1 &P'A'_2M'\end{bmatrix}$ satisfies (S4).
By our choice of $\bb$, the row vector $\bb M$ is lexicographically
the smallest member among the rows of $P_1B_1M$, and the same is
true among the rows of the matrix $P_1B_1M''$, where
\[M''=M\begin{bmatrix}I_{n_2-n'_2}&0\\0&M'\end{bmatrix}.\]
It follows that the matrix
\[\begin{bmatrix} 1&0\\ 0&P'\end{bmatrix}
\begin{bmatrix} A_1& PA_2M''\end{bmatrix}
=\begin{bmatrix} \ast&0\\ P'A'_1&P'A'_2M'\end{bmatrix}\]
satisfies (S4).
Set
\[P''=\begin{bmatrix} 1&0\\ 0&P'\end{bmatrix}P.\]
Since $P'A'_2M'$ satisfies (S2), while the first row of $P''A_2M''$
is the same as that of $PA_2M$ which
is $\begin{bmatrix}\allone_{n_2-n'_2}&\allz_{n'_2}\end{bmatrix}$, 
the matrix $P''A_2M''$
satisfies both (S2) and (S3)$'$.
We have already shown that the matrix
$\begin{bmatrix} P''A_1 &P''A_2M\end{bmatrix}$
satisfies (S4).
\end{proof}

\begin{lem}\label{lem:P2}
Under the same assumption as in Lemma \ref{lem:P1}, there exist $M_1\in\M_{m}$ and $M_2\in\M_{n_2}$ such that $\begin{bmatrix}M_1A_1&M_1A_2M_2\end{bmatrix}$ satisfies {\rm(S1)} and {\rm(S4)}, 
and $M_1A_2M_2$ satisfies {\rm(S2)} and {\rm(S3)}$'$.  
\end{lem}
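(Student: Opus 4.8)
The plan is to bootstrap from Lemma \ref{lem:P1}. First I would apply that lemma to obtain $P\in\P_m$ and $M\in\M_{n_2}$ so that $C:=\begin{bmatrix}PA_1&PA_2M\end{bmatrix}$ satisfies (S4) while its right block $PA_2M$ satisfies (S2) and (S3)$'$. The only condition still missing is (S1), which concerns the leading (first nonzero) entry of each row of the \emph{whole} matrix. Since (S1) is a statement about row leading entries, the natural remedy is to left-multiply by a diagonal $D=\diag(d_1,\dots,d_m)\in\D_m$ with $d_i$ the inverse of the leading entry of the $i$th row of $C$; then $DC$ satisfies (S1) by construction, and I would build $M_1$ out of $D$, $P$, and a final sorting permutation, and take $M_2=ME$ for a suitable diagonal $E$.

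The difficulty is that this row scaling destroys (S2): if the pivot of a column $j$ of $PA_2M$ sits in row $\rho(j)$, then after scaling that pivot becomes $d_{\rho(j)}$ rather than $1$. To repair it I would introduce the column-scaling diagonal $E=\diag(e_1,\dots,e_{n_2})$ on the $A_2$-block, forced by $e_j=d_{\rho(j)}^{-1}$ so that every column pivot returns to $1$. One must then check this stays consistent with (S1): for a row $i$ whose leading entry lies in the $A_2$-block, in its leading column $\gamma(i)$, the repaired leading entry equals $d_i\,e_{\gamma(i)}=d_i\,d_{\rho(\gamma(i))}^{-1}$ times the original entry. The structural observation that makes the coupled system solvable is that the pivot row of a column always lies weakly above any row having that column as leading column, i.e.\ $\rho(\gamma(i))\le i$; hence the constraints resolve from the top row downward, and in the boundary case $\rho(\gamma(i))=i$ condition (S2) already forces the entry to be $1$, so the constraint is vacuous and $d_i$ may be set to $1$. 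The same observation preserves (S3)$'$, since the distinguished first nonzero row is the pivot row of each of its $\allone$-columns, so those pivots are normalized to $1$.

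The main obstacle I expect is (S4): multiplying distinct rows by distinct scalars $d_i$ need not respect $\prec$, so lexicographic order can be disturbed. The saving feature is that two rows with \emph{different} leading positions are compared at a position where one normalized entry is $1$ and the other is $0$, and this comparison is scaling-invariant; hence only rows sharing a common leading position can be disordered. I would therefore conclude by re-sorting within each such block and absorbing that permutation into $M_1$. The crux, and the most delicate point of the whole argument, is to verify that this last permutation preserves (S2) and (S3)$'$; here I would exploit $1=\min(T_0)$, so that lexicographic sorting brings a normalized leading $1$ to the top among the tied rows, keeping the leading entry of each column equal to $1$ and keeping the ones-then-zeros row atop the $A_2$-block.
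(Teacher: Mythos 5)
Your reduction of (S1) to a coupled row/column rescaling is sound: the observation that the pivot row $\rho(j)$ of a column lies weakly above every row whose leading column is $j$ does make the system $d_i e_{\gamma(i)}a_{i,\gamma(i)}=1$, $d_{\rho(j)}e_j=1$ solvable from the top down, and that rescaling preserves (S2) and (S3)$'$. The gap is in the final step: re-sorting the rows within a block of common leading position does \emph{not} preserve (S2) or (S3)$'$, and the heuristic that ``$1=\min(T_0)$ brings the right row to the top'' points in the wrong direction. Precisely because $1\prec 0$, a normalized row that \emph{agrees} with the ones-then-zeros row on its support but has an extra nonzero entry where that row has a zero is lexicographically \emph{smaller}, and leapfrogs it. Concretely, take $n_1=0$, the ordering $1\prec-1\prec i\prec-i\prec 0$, and
\[
C=\begin{bmatrix} 1&1&0&0\\ -1&1&0&1 \\ -1&-1&1&i \end{bmatrix},
\]
which satisfies (S2), (S3)$'$ and (S4) and is therefore a legitimate output of Lemma \ref{lem:P1} (with $P=I$, $M=I$). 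Your scaling forces $d=(1,-1,-1)$ and $e=(1,1,-1,-1)$, producing the rows $(1,1,0,0)$, $(1,-1,0,1)$, $(1,1,1,i)$, which indeed satisfy (S1), (S2) and (S3)$'$. All three rows now have leading position $1$, so your last step sorts this single block; since $1\prec 0$, the row $(1,1,1,i)$ rises to the top, and the result violates both (S3)$'$ (the first row is no longer ones followed by zeros) and (S2) (the first nonzero entry of the last column is $i$).

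This failure is structural rather than a matter of a cleverer tie-break: to restore (S3)$'$ after such a leapfrog you must permute and rescale \emph{columns} again, which disturbs (S4) in the rows below, so a single pass of ``rescale, then sort'' cannot close the argument. It is essentially the same phenomenon as the paper's counterexample to Algorithm \ref{c1}. The paper avoids it by proving the lemma by induction on $m$: it fixes the first (nonzero) row, splits off the columns on which that row vanishes, applies Lemma \ref{lem:P1} afresh to the upper block, and recurses on the lower-right block, so that every normalization is followed by a genuinely new column adjustment at the next level. To salvage your approach you would have to interleave the rescaling and sorting with further column operations, at which point you are rebuilding that induction.
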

\begin{proof}
We will prove the assertion by induction on $m$. Suppose $m=1$. It is clear that every single row vector always satisfies (S4). Also, every single row vector satisfying (S3)$'$ necessarily satisfies (S2). Now, if $A_1=0$ or $n_1=0$, then there exists $M_2\in\M_{n_{2}}$ such that $A_2M_2$ satisfies (S3)$'$ and hence (S1) is satisfied.  If $A_1\neq 0$, then there exist $a\in T$ and $M_2\in\M_{n_2}$ such that $aA_1$ satisfies (S1) and $aA_2M_2$ satisfies (S3)$'$. 

Assume the assertion is true up to $m-1$. First, we consider the case where $A_1=0$ or $n_1=0$. Without loss of generality, we may assume $A_2\neq 0$. Furthermore, we may assume that the first row and the first column of $A_2$ are ones followed by zeros. Then there exists $P'\in\P_{n_2}$ such that 
\[
A_2P'=
\left[
\begin{array}{cc|c}
1&\begin{matrix}\allone&0\end{matrix}&0\\
\allone^T&\begin{matrix}B_1& B_2\end{matrix}&0\\
0&C_1&C_2
\end{array}
\right]
\]
where $B_2\in T_0^{m_1\times t}$ has no zero column. By Lemma \ref{lem:P1}, there exist $P\in\P_{m_1}$ and $M\in\M_{t}$ such that $PB_2M$ satisfies (S2) and (S3)$'$ and $\begin{bmatrix}PB_1&PB_2M\end{bmatrix}$ satisfies (S4). 
Let 
\[C_1'=C_1\begin{bmatrix}I_{n_2-n_2'-t-1}&0\\0&M\end{bmatrix}.\] 
By inductive hypothesis, there exist $M_1'\in\M_{m-m_1-1}$, and $M_2'\in\M_{n_2'}$ such that $\begin{bmatrix}M_1'C_1'&M_1'C_2M_2'\end{bmatrix}$ satisfies (S1) and (S4), and $M_1'C_2M_2'$ satisfies (S2) and (S3)$'$. By setting 
\[
M_1=\begin{bmatrix}1&0&0\\0&P&0\\0&0&M_1'\end{bmatrix},\quad M_2=P'\begin{bmatrix}I_{n_2-n_2'-t}&0&0\\0&M&0\\0&0&M_2'\end{bmatrix},
\]
the matrix $M_1A_2M_2$ satisfies (S1)--(S4).

Next we consider the case $A_1\neq 0$. Without loss of generality, we may assume that the first nonzero column in $A_1$ is ones followed by zeros. Write 
\[
A_1=
\begin{bmatrix}
0_{m\times t}&
\begin{array}{cc}
\bold{1}^{\T}&B_1\\
0&D_1
\end{array}
\end{bmatrix}
\]
for some $t<n_1$, with $B_1\in T_0^{m_1\times( n_1-t-1)}$ and $D_1\in T_0^{m_2\times (n_1-t-1)}$ for some $m_1,m_2$ with $m_1+m_2=m$ and $m_2<m$. 
Then there exists $P'\in\P_{n_2}$ such that \[
A_2P'=\begin{bmatrix}
B_2&0_{m_1\times n_2'}\\
D_2&C_2
\end{bmatrix}\] for some $n_2'\geq 0$, where $B_2\in T_0^{m_1\times (n_2-n_2')}$ has no zero column. By Lemma \ref{lem:P1}, there exist $P\in\P_{m_1}$ and $M\in\M_{n_2-n_2'}$ such that $PB_2M$ satisfies (S2) and (S3)$'$ and 
$\begin{bmatrix}PB_1&PB_2M\end{bmatrix}$ satisfies (S4). Let $C_1=\begin{bmatrix}D_1&D_2M\end{bmatrix}$. Then by inductive hypothesis, there exist $M_1'\in\M_{m_2}$ and $M_2'\in\M_{n_2'}$ such that $\begin{bmatrix}M_1'C_1&M_1'C_2M_2'\end{bmatrix}$ satisfies (S1) and (S4), and $M_1'C_2M_2'$ satisfies (S2) and (S3)$'$. By setting 
\[
M_1=\begin{bmatrix}P&0\\0&M_1'\end{bmatrix},\quad M_2=P'\begin{bmatrix}M&0\\0&M_2'\end{bmatrix},
\]
the proof is complete. 
\end{proof}

\begin{thm}\label{thm:1}
Every matrix in $T_0^{m\times n}$ is equivalent to a matrix that is in  standard form.
\end{thm}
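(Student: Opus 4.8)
The plan is to obtain the theorem as an immediate consequence of Lemma~\ref{lem:P2} applied to the trivial column splitting of $A$, together with one short observation about the interplay between (S3)$'$ and (S4). Essentially all of the combinatorial work has already been discharged by Lemmas~\ref{lem:P1} and~\ref{lem:P2}, so the final step is light.

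Given $A\in T_0^{m\times n}$, I would write $A=\begin{bmatrix}A_1&A_2\end{bmatrix}$ with $n_1=0$, so that $A_1$ is the empty block and $A_2=A$. This is a legitimate instance of the hypothesis of Lemma~\ref{lem:P2}; indeed the case $n_1=0$ is explicitly treated in its proof. Lemma~\ref{lem:P2} then furnishes monomial matrices $M_1\in\M_m$ and $M_2\in\M_n$ such that $M_1AM_2$ satisfies (S1), (S2), (S3)$'$, and (S4). Since $M_1$ and $M_2$ are monomial, $A$ is equivalent to $M_1AM_2$, so it remains only to promote (S3)$'$ to the full condition (S3).

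The key observation is that (S4) together with the normalization $\max(T_0)=0$ forces every all-zero row to the bottom of the matrix. Indeed, the zero vector $\allz$ is lexicographically the largest row vector under $\prec$: for any nonzero row $\bb$, its first nonzero entry lies in $T$ and hence is $\prec 0$, so that $\bb<\allz$. Consequently, in a matrix whose rows are in lexicographical order, either the matrix is entirely zero---in which case (S3) holds trivially---or its first row is already nonzero. In the latter case the ``first nonzero row'' appearing in (S3)$'$ is literally the first row, so (S3)$'$ coincides with (S3). Either way $M_1AM_2$ satisfies (S3).

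Collecting the conclusions, $M_1AM_2$ satisfies (S1)--(S4) and is therefore in standard form, which proves the theorem. I do not anticipate a genuine obstacle: the only point requiring care is the argument that zero rows sort last under $\prec$, which is precisely where the ordering convention $\max(T_0)=0$ is used, and this is exactly the role played by the weaker condition (S3)$'$ flagged in the discussion preceding Lemma~\ref{lem:P1}.
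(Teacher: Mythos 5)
Your proposal is correct and follows exactly the paper's route: apply Lemma~\ref{lem:P2} with $A_1$ empty and $A_2=A$. The only difference is that you explicitly justify the promotion of (S3)$'$ to (S3) via the observation that zero rows sort last under (S4) because $\max(T_0)=0$; the paper leaves this small step implicit, and your argument for it is sound.
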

\begin{proof}
Let $W\in T_0^{m\times n}$. Setting $A_1=\varnothing$ and $A_2=W$ in Lemma \ref{lem:P2}, we see that $W$ is equivalent to a matrix that is in standard form.
\end{proof}

\begin{cor}
Every unit weighing matrix is equivalent to a unit weighing matrix that is in standard form.
\end{cor}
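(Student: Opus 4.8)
The plan is to apply Theorem~\ref{thm:1} directly and then verify that the weighing property survives the equivalence. Given a unit weighing matrix $W\in T_0^{n\times n}$ with $WW^*=wI$, Theorem~\ref{thm:1} produces monomial $T_0$-matrices $M_1,M_2\in\M_n$ such that $B=M_1WM_2$ is in standard form. What remains is to check that $B$ is again a unit weighing matrix of weight $w$, and here the defining equation $WW^*=wI$ is the object to track through the multiplications.

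The key observation I would isolate is that, because $T=\{z\in\C:|z|=1\}$, every monomial $T_0$-matrix is unitary. Writing $M=PD$ with $P\in\P_n$ and $D=\diag(d_1,\dots,d_n)$ where each $d_j\in T$, one computes $MM^*=PDD^*P^\top=P\diag(|d_1|^2,\dots,|d_n|^2)P^\top=PP^\top=I$, using $|d_j|=1$ and $P^\top=P^{-1}$. In particular both $M_1$ and $M_2$ satisfy $M_iM_i^*=I$.

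With unitarity in hand, the main computation is short: I would expand $BB^*=M_1WM_2M_2^*W^*M_1^*$, collapse the middle factor via $M_2M_2^*=I$ to get $M_1(WW^*)M_1^*=wM_1M_1^*$, and then apply $M_1M_1^*=I$ to conclude $BB^*=wI$. Since $B$ is square of order $n$ with entries in $T_0$ and satisfies this identity, it is a unit weighing matrix of order $n$ and weight $w$, and by construction it is in standard form.

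I do not expect a genuine obstacle in this argument. The only substantive point is the unitarity of monomial $T_0$-matrices, which rests entirely on the hypothesis $|z|=1$ for $z\in T$ rather than on mere invertibility; everything else is the routine manipulation indicated above. It is worth emphasizing in the write-up that this is exactly where the restriction $T=\{z\in\C:|z|=1\}$ (as opposed to a general subgroup of units) is used, since for a general $T$ the equivalent standard form need not preserve $WW^*=wI$.
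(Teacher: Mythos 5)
Your proof is correct and takes essentially the same route as the paper: apply Theorem~\ref{thm:1} with $T=\{z\in\C:|z|=1\}$. The paper declares the conclusion ``immediate'' and omits the verification you supply, namely that monomial $T_0$-matrices are unitary so that $BB^*=wI$ is preserved; your spelled-out check is a worthwhile addition but not a different argument.
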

\begin{proof}
Setting $T=\{z\in \mathbb{C}:|z|=1\}$, the proof is immediate from Theorem \ref{thm:1}.
\end{proof}

\section*{Acknowledgement}
The second author would like to acknowledge the Hitachi Global Foundation for providing a scholarship grant.

\end{document}